\newcommand{\Z}{\ensuremath{\mathbb{Z}}}
\newtheorem{theor}{Theorem}
\newtheorem{thm}{Theorem}[section]
\newtheorem{lemma}[thm]{Lemma}
\newtheorem{cor}[thm]{Corollary}
\newtheorem{corollary}{Corollary}
\newtheorem{proposition}[thm]{Proposition}
\newtheorem*{remark}{Remark}
\begin{document}

\title{The class $\mathcal{MN}$ of groups in which all maximal subgroups are normal}
\author{Aglaia Myropolska}
\thanks{The author acknowledges the support of the Swiss National Science Foundation, grant 200021\_144323 and P2GEP2\_162064.}
\address{Laboratoire de Math\'ematiques, 
Universit\'e Paris-Sud 11, Orsay, France}
\email{aglaia.myropolska@math.u-psud.fr}
\vspace{1cm}
\date{\today}
\begin{abstract}
We investigate the class $\mathcal{MN}$ of groups with the property that all maximal subgroups are normal. The class $\mathcal{MN}$ appeared in the framework of the study of potential counter-examples to the Andrews-Curtis conjecture. 
In this note we give various structural properties of groups in $\mathcal{MN}$ 
and present examples of groups in $\mathcal{MN}$ and not in $\mathcal{MN}$.

\end{abstract}

\maketitle

\section{Introduction}
The class $\mathcal{MN}$ was introduced in \cite{Myropolska} as the class of groups with the property that all maximal subgroups are normal. The study of $\mathcal{MN}$ was motivated by the analysis of potential counter-examples to the Andrews-Curtis conjecture \cite{MR0173241}. It was shown in \cite{Myropolska} that a finitely generated group $G$ in the class $\mathcal{MN}$ satisfies the so-called ``generalised Andrews-Curtis conjecture'' (see \cite{MR2195451} for the precise definition) and thus cannot confirm potential counter-examples to the original conjecture. 

Apart from its relation to the Andrews-Curtis conjecutre, the study of the class $\mathcal{MN}$ can be interesting on its own. Observe that if a group $G$ belongs to $\mathcal{MN}$ then all maximal subgroups of $G$ are of finite index. The latter group property has been considered in the literature for different classes of groups. 
For instance in the linear setting, Margulis and Soifer \cite{MR613853} showed that all maximal subgroups of a finitely generated linear group $G$ are of finite index if and only if $G$ is virtually solvable.
The above property also was considered for branch groups, however the results in this direction are partial and far from being as general as for linear groups. 
For instance, the question whether all maximal subgroups of the first Grigorchuk group $\Gamma$ have finite index was open for a long time before it was answered positively by Pervova in \cite{MR1841763}. There were several generalizations of this result to some other branch $p$-groups, for instance the Gupta-Sidki $p$-groups (see \cite{MR2197824,Maxsub_spinal}). All these approaches are inherently related to the structure of the specific branch group. One could expect that finitely generated branch groups only have maximal subgroups of finite index, but Bondarenko \cite{MR2727305} exhibited a finitely generated branch group (not a $p$-group) that has maximal subgroups of infinite index.

Maximal subgroups of a given group $G$ are also related to its primitive actions: any primitive action of $G$ in $\mathcal{MN}$ coincides with its action on the finite cyclic quotient $G/M$ for some maximal subgroup $M$. 


A straightforward example of a group belonging to $\mathcal{MN}$ is an abelian, and more generally, a nilpotent group. Other examples also include the above mentioned finitely generated branch $p$-groups, namely the family of Grigorchuk groups and the Gupta-Sidki $p$-groups, as well as others.  A somehow exceptional example is provided by the construction of Ol'shanski in \cite{MR1191619} of a countable (not finitely generated) group $\mathcal{G}$ without maximal subgroups. The construction can be adapted so that $\mathcal{G}$, which clearly belongs to $\mathcal{MN}$, contains the free group $F_2$ as a subgroup. 
However, a free group and more generally any free product of finitely generated groups do not belong to $\mathcal{MN}$ (see Section \ref{freeproduct}). 

We would like to point out that in the finitely generated group setting, when the existence of a maximal subgroup is guaranteed by Zorn's lemma, we have no example of a (finitely generated) non-amenable group belonging to $\mathcal{MN}$. Moreover, we still do not know whether the class $\mathcal{MN}$ is closed under taking subgroups when the ambient group is finitely generated.

Below we summarize some properties of groups in the class $\mathcal{MN}$.

\begin{theor}
\label{properties}
Let $G$ be a group. Then the following statements are equivalent.
\begin{enumerate}
\item $G$ is in $\mathcal{MN}$.
\item For every normal subgroup $N\lhd G$, the quotient $G/N$ is in $\mathcal{MN}$.
\item All maximal subgroups of $G$ are of finite index and all finite quotients of $G$ are nilpotent.
\item $[G,G]\leq \Phi(G)$, where $[G,G]$ is the commutator subgroup of $G$ and $\Phi(G)$ is the Frattini subgroup of $G$.
\end{enumerate}
\end{theor}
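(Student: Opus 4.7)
The plan is to organize the equivalences around condition (1), proving $(1) \Leftrightarrow (4)$, $(1) \Leftrightarrow (2)$, and $(1) \Leftrightarrow (3)$ separately. The unifying observation is that a normal maximal subgroup $M \lhd G$ has quotient $G/M$ with no proper nontrivial subgroup (by the correspondence theorem) and therefore must be cyclic of prime order; this single fact controls both the ``finite index'' aspect of the statement and the ``abelianization contains'' aspect.

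For $(1) \Leftrightarrow (4)$, I would use that $\Phi(G) = \bigcap_M M$ with $M$ ranging over maximal subgroups (and $\Phi(G) = G$ if none exist). Assuming (1), every maximal $M$ has $G/M$ cyclic of prime order, hence abelian, so $[G,G] \subseteq M$; intersecting gives $[G,G] \subseteq \Phi(G)$. Conversely, assuming (4), for every maximal $M$ we have $[G,G] \subseteq \Phi(G) \subseteq M$, so $G/M$ is a quotient of the abelian group $G/[G,G]$ and is itself abelian, forcing $M \lhd G$.

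For $(1) \Leftrightarrow (2)$, the direction $(2) \Rightarrow (1)$ is immediate with $N = \{e\}$. For $(1) \Rightarrow (2)$, the correspondence theorem identifies maximal subgroups of $G/N$ with maximal subgroups of $G$ containing $N$; by (1) each such $M$ is normal in $G$, so $M/N$ is normal in $G/N$.

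For $(1) \Leftrightarrow (3)$, assume (1). Each maximal $M$ has $G/M$ cyclic of prime order, hence $[G:M] < \infty$. By $(1) \Rightarrow (2)$ already established, every finite quotient of $G$ lies in $\mathcal{MN}$, and I then invoke the classical fact that a finite group is nilpotent if and only if all its maximal subgroups are normal (which follows from the normalizer condition: in a nilpotent group any proper subgroup is strictly contained in its normalizer). Conversely, assuming (3), let $M$ be maximal in $G$; then $M$ has finite index, so its normal core $M_G = \bigcap_{g \in G} g M g^{-1}$ is of finite index in $G$. The finite quotient $G/M_G$ is nilpotent by hypothesis, hence $M/M_G$ is normal in $G/M_G$, so $M \lhd G$. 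The only non-elementary ingredient is the classical characterization of finite nilpotent groups; everything else reduces to the correspondence theorem and the simple structure of $G/M$ for $M$ normal and maximal, so I do not anticipate any genuine obstacle.
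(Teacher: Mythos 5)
Your proposal is correct and follows essentially the same route as the paper: the same hub-and-spoke decomposition around (1), the same use of the correspondence theorem to see that $G/M$ is cyclic of prime order for $M$ maximal and normal, the same normal-core argument for $(3)\Rightarrow(1)$, and the same appeal to the classical characterization of finite nilpotent groups. The only step to tighten is $(4)\Rightarrow(1)$, where you write that ``$G/M$ is a quotient of the abelian group $G/[G,G]$'' --- this presupposes $M\lhd G$, which is what is being proved; the clean version (and essentially what the paper does, via the normal core) is that $M/[G,G]$ is a subgroup of the abelian group $G/[G,G]$, hence normal, so its preimage $M$ is normal in $G$.
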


It follows, in particular, that if $G$ and $H$ are finitely generated groups in $\mathcal{MN}$ then their direct product $G\times H$ is in $\mathcal{MN}$ as well (see Corollary \ref{direct}). 
We further list a few corollaries for \emph{finitely generated groups} in $\mathcal{MN}$ (see Section \ref{further-prop} for more properties). 
\begin{corollary}
Let $G$ be a finitely generated group in $\mathcal{MN}$. Then $G$ does not contain the free group $F_n$ with $n\geq 2$ as a subgroup of finite index.
\label{free-index}
\end{corollary}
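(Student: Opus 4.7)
The plan is to derive a contradiction with condition~(3) of Theorem~\ref{properties}, which asserts that every finite quotient of a group in $\mathcal{MN}$ is nilpotent. So assume for contradiction that the finitely generated group $G \in \mathcal{MN}$ contains $F_n$ ($n\geq 2$) as a subgroup of finite index.

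First I would replace $F_n$ by a finite-index \emph{normal} subgroup of $G$. Let $N$ be the normal core of $F_n$ in $G$, namely $N = \bigcap_{g\in G} g F_n g^{-1}$. Since $[G:F_n]<\infty$ we have $[G:N]<\infty$ as well. By Nielsen--Schreier, $N$ is free, and the Schreier index formula gives $\rank(N) = [F_n:N](n-1)+1 \geq n \geq 2$, so $N$ is a non-abelian free group.

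Next I would use $N$ to manufacture a non-nilpotent finite quotient of $G$. Being non-abelian free, $N$ surjects onto every $2$-generated finite group, in particular onto $S_3$. Pick such a surjection, let $K\leq N$ be its kernel, and set $K_G = \bigcap_{g\in G} gKg^{-1}$, the normal core of $K$ in $G$. Then $K_G \lhd G$ with $[G:K_G]<\infty$, and since $K_G \leq K$, inside the finite group $Q := G/K_G$ the subgroup $N/K_G$ surjects onto $N/K\cong S_3$. If $Q$ were nilpotent, then its subgroup $N/K_G$ would be nilpotent, hence so would be its quotient $S_3$; but $S_3$ is not nilpotent. Therefore $Q$ is a non-nilpotent finite quotient of $G$, contradicting Theorem~\ref{properties}(3).

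The only step requiring any care is the double passage to normal cores -- first of $F_n$ in $G$ to secure a non-abelian free normal subgroup $N\lhd G$, and then of $K$ in $G$ to transfer the non-nilpotent finite quotient $S_3$ of $N$ into a non-nilpotent finite quotient of $G$. Once these two reductions are in place, the contradiction is immediate, so I do not anticipate any substantive obstacle.
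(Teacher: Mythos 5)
Your proof is correct, but it takes a genuinely different and in fact more elementary route than the paper. Both arguments aim at the same target -- producing a finite non-nilpotent quotient of $G$ and invoking Theorem~\ref{properties}(3) -- and both begin by passing to the normal core of the free subgroup to get a non-abelian free normal subgroup of finite index (the paper justifies non-abelianness via non-amenability of $G$, you via the Nielsen--Schreier/Schreier index formula; either works). The divergence is in how the finite non-nilpotent quotient is manufactured. The paper takes the verbal subgroup $K^6$ generated by sixth powers, which is characteristic in $K$ and hence normal in $G$, and appeals to M.~Hall's deep theorem that the Burnside group $B(m,6)$ is finite, together with the observation that $B(2,6)$ contains $D_{12}$ and so is not nilpotent. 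You instead exploit the fact that a non-abelian free group surjects onto $S_3$, and then take the normal core in $G$ of the kernel of that surjection; the resulting finite quotient $G/K_G$ contains a subgroup mapping onto $S_3$ and so cannot be nilpotent. Your second passage to a normal core is the only point needing care, and you handle it correctly: $K_G$ has finite index in $G$ because $K$ does, and $N/K_G$ still surjects onto $N/K\cong S_3$. The payoff of your approach is that it avoids the Burnside problem machinery entirely and uses only standard facts about free groups and normal cores; the paper's choice of $K^6$ has the mild advantage of being characteristic in $K$, so normality in $G$ is automatic without a second core, but at the cost of citing a much heavier theorem.
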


\begin{corollary}
Let $G$ be a finitely generated linear group in $\mathcal{MN}$. Then $G$ is nilpotent.
\label{linear}
\end{corollary}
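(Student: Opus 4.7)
The plan is to combine the $\mathcal{MN}$ hypothesis with two classical structural results to reduce the problem to a property of polycyclic groups.

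First, by Theorem~A(3), every maximal subgroup of $G$ is of finite index. Because $G$ is finitely generated and linear, the Margulis--Soifer theorem recalled in the introduction then gives that $G$ is virtually solvable. A classical theorem of Mal'cev says that every finitely generated solvable linear group is polycyclic; since the class of polycyclic groups is closed under extensions by finite groups, $G$ is itself polycyclic.

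It remains to upgrade ``polycyclic'' to ``nilpotent'' using the hypothesis once more: by Theorem~A(3), every finite quotient of $G$ is nilpotent, and since polycyclic groups are residually finite (Hirsch), $G$ is in fact residually a finite nilpotent group, so $\bigcap_{n} \gamma_n(G) = 1$. I would then invoke the classical structural fact that a polycyclic residually nilpotent group is nilpotent. The main obstacle is precisely this last implication, which is not formal but relies on the rigidity of polycyclic groups. An alternative, more self-contained route uses Theorem~A(4), which gives $[G,G] \leq \Phi(G)$: combined with the classical result (due to P.~Hall) that the Frattini subgroup of a finitely generated solvable group is nilpotent, this already yields that $[G,G]$ is nilpotent, so $G$ is metanilpotent, and one then finishes by exploiting the finite-quotient hypothesis to promote metanilpotent to nilpotent.
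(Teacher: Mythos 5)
Your first half coincides with the paper's proof: Theorem~\ref{properties} gives that all maximal subgroups have finite index and all finite quotients are nilpotent, and Margulis--Soifer then gives that $G$ is virtually solvable. After that, both of your routes have genuine gaps. First, a minor one: the class of polycyclic groups is \emph{not} closed under extensions by finite groups (any finite non-solvable group is polycyclic-by-finite but not polycyclic). The repair is available to you and is exactly what the paper does: a solvable normal subgroup of finite index has finite quotient $G/N$, which is nilpotent by hypothesis, hence solvable, so $G$ itself is solvable (and then polycyclic by Mal'cev if you want it).

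The serious gap is the final step. The ``classical structural fact'' that a polycyclic residually nilpotent group is nilpotent is false: the Klein bottle group $K=\langle a,t \mid tat^{-1}=a^{-1}\rangle$ is polycyclic ($\Z$-by-$\Z$), has $\gamma_{n+1}(K)=\langle a^{2^{n}}\rangle$ so $\bigcap_n\gamma_n(K)=1$, yet it is not nilpotent. The point is that in passing from ``all finite quotients are nilpotent'' to ``residually (finite nilpotent)'' you have discarded exactly the strength you need: $K$ is residually nilpotent but has non-nilpotent finite quotients such as $S_3$. The correct closing step, and the one the paper uses, is Wehrfritz's theorem \cite{MR0251120}: a finitely generated solvable group all of whose finite quotients are nilpotent is nilpotent. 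Your alternative route via $[G,G]\leq\Phi(G)$ and P.~Hall's nilpotency of the Frattini subgroup of a finitely generated solvable group does show $G$ is nilpotent-by-abelian, but the remaining promotion to nilpotent ``by exploiting the finite-quotient hypothesis'' is left unproved, and it is essentially the entire content of Wehrfritz's theorem; as stated, neither route closes.
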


This paper is organised as follows. Section $2$ contains proofs of Theorem~\ref{properties}, Corollary~\ref{free-index} and Corollary~\ref{linear}, as well as other properties and characterizations of groups in $\mathcal{MN}$. Section~$3$ contains examples of groups in $\mathcal{MN}$ and not in $\mathcal{MN}$. And in Appendix~\ref{appendix} we discuss the relation of the class $\mathcal{MN}$ to the Andrews-Curtis conjecture. 

\medskip
\noindent \textbf{Acknowledgements:} It is a pleasure to thank Pierre de la Harpe and Tatiana Nagnibeda for the encouragement to carry on this work, for numerous discussions and useful references, and Denis Osin for the explanation on a construction by Ol'shanski.

\section{Properties of groups in $\mathcal{MN}$}

\subsection{Criterion in terms of generating and normally generating sets}

\label{section-obs} We will first present a criterion for a finitely generated group to be in $\mathcal{MN}
$ in terms of generating and normally generating sets. 

\smallskip
We recall that the \emph{Frattini subgroup} $\Phi(G)$ of a group $G$ is the intersection of all maximal subgroups of $G$, and $\Phi(G)=G$ if $G$ does not have maximal subgroups. Note that if a non-trivial group $G$ is finitely generated then Zorn's lemma implies that there exists a proper maximal subgroup of $G$.

Recall the following well-known properties of the Frattini subgroup.
\begin{lemma}
\label{frattini}
Let $G$ be a group. 
\begin{enumerate}
\item \cite[5.2.12]{MR648604}  Frattini subgroup $\Phi(G)$ is equal to the set of non-generators of $G$, i.e. if $g\in \Phi(G)$ and $\langle g,X\rangle=G$ then $\langle X\rangle=G$. 
\item \cite{MR1218122} Let $G=\langle x_1,...,x_n\rangle$ and $\varphi_1,...,\varphi_n \in \Phi(G)$. Then \begin{center}$\langle x_1\varphi_1,\dots,x_n\varphi_n\rangle=G$.\end{center}
\end{enumerate}
\end{lemma}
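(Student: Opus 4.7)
The plan is to handle part (1) first and then derive part (2) as a quick consequence. Part (1) is the classical identification of $\Phi(G)$ with the set of non-generators, and I would prove it by two separate inclusions.

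For the inclusion $\Phi(G)\subseteq\{\text{non-generators}\}$, I take $g\in\Phi(G)$ and $X\subseteq G$ with $\langle g,X\rangle=G$. If $\langle X\rangle$ were proper, Zorn's lemma applied to the poset of proper subgroups containing $\langle X\rangle$ would yield a maximal subgroup $M\supseteq\langle X\rangle$. But then $g\in\Phi(G)\subseteq M$ would force $G=\langle g,X\rangle\subseteq M$, a contradiction. Conversely, if $g$ is a non-generator and $M$ is a maximal subgroup with $g\notin M$, then maximality gives $\langle M,g\rangle=G$, and the non-generator property then forces $\langle M\rangle=M=G$, contradicting the properness of $M$.

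For part (2), I iterate a one-step substitution. Suppose inductively that $\langle x_1\varphi_1,\ldots,x_{i-1}\varphi_{i-1},x_i,\ldots,x_n\rangle=G$. Adjoining $\varphi_i$ to the generating set preserves the generated subgroup, and writing $x_i=(x_i\varphi_i)\varphi_i^{-1}$ gives
\[
\langle x_1\varphi_1,\ldots,x_{i-1}\varphi_{i-1},\varphi_i,x_i\varphi_i,x_{i+1},\ldots,x_n\rangle=G.
\]
Applying part (1) to the non-generator $\varphi_i\in\Phi(G)$ then allows me to drop $\varphi_i$, yielding $\langle x_1\varphi_1,\ldots,x_i\varphi_i,x_{i+1},\ldots,x_n\rangle=G$. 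Iterating from $i=1$ to $n$ concludes the proof.

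The only delicate point is the appeal to Zorn's lemma in part (1): one must know that every chain of proper subgroups containing $\langle X\rangle$ has a proper upper bound, i.e.\ that the union of such a chain does not exhaust $G$. This is immediate when $G$ is finitely generated, since any finite generating set of $G$ would lie in some single member of the chain, forcing that member to equal $G$; as the applications in this paper concern finitely generated groups, I do not expect any further obstruction.
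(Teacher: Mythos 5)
The paper does not actually prove this lemma: both parts are quoted as classical facts with citations (Robinson for (1), the other reference for (2)), so there is no in-paper argument to compare against. Your proof is the standard textbook one and is essentially correct: the two inclusions in part (1) are argued properly (note that you prove both inclusions, whereas the ``i.e.''\ clause of the statement only uses $\Phi(G)\subseteq\{\text{non-generators}\}$), and the induction in part (2), replacing $x_i$ by $x_i\varphi_i$ one step at a time and discarding the non-generator $\varphi_i$ via part (1), is exactly the intended reduction. The one place where you sell yourself short is the Zorn step: the lemma is stated for an \emph{arbitrary} group $G$, and your retreat to the finitely generated case is unnecessary. In the poset of proper subgroups of $G$ containing $\langle X\rangle$, the union $U$ of a chain cannot equal $G$: if it did, then $g\in U$ would lie in some member $H$ of the chain, whence $H\supseteq\langle g,X\rangle=G$, contradicting properness of $H$. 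So every chain has a proper upper bound precisely because $\langle g,X\rangle=G$, Zorn applies, and a maximal element of this poset is a maximal subgroup of $G$ (any intermediate proper subgroup would still contain $\langle X\rangle$). With that one observation your argument covers the general statement; part (2) needs no such care since its hypothesis $G=\langle x_1,\dots,x_n\rangle$ already makes $G$ finitely generated.
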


\begin{proposition} Let $G$ be a finitely generated group. Then $G$ is in $\mathcal{MN}
$ if and only if all normally generating sets of $G$ are generating sets of $G$. 
\label{obs}
\end{proposition}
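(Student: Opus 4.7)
The plan is to prove both directions directly from the definitions, using the fact that finite generation of $G$ guarantees (via Zorn's lemma) that every proper subgroup is contained in some maximal subgroup.

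For the forward direction, suppose $G \in \mathcal{MN}$ and let $S \subseteq G$ normally generate $G$, i.e.\ $\langle\langle S\rangle\rangle = G$. Assume toward a contradiction that $\langle S\rangle \neq G$. Since $G$ is finitely generated, $\langle S\rangle$ is contained in some maximal subgroup $M$. By hypothesis $M \lhd G$, so $M$ contains the normal closure of $S$; but that normal closure is all of $G$, contradicting $M \subsetneq G$. Hence $\langle S\rangle = G$.

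For the reverse direction, assume every normally generating set of $G$ is a generating set, and let $M$ be a maximal subgroup of $G$. Consider $\langle\langle M\rangle\rangle$, which is normal in $G$ and contains $M$. If $M$ is not normal, then $\langle\langle M\rangle\rangle \supsetneq M$, so by maximality $\langle\langle M\rangle\rangle = G$. Thus $M$ is a normally generating set, so by hypothesis $\langle M\rangle = G$; but $\langle M\rangle = M$ since $M$ is already a subgroup, contradicting properness of $M$. Therefore $M \lhd G$, and $G \in \mathcal{MN}$.

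Neither direction presents a genuine obstacle; the only slightly delicate point is ensuring that proper subgroups sit inside maximal subgroups, which is exactly where the finite generation hypothesis enters (through Zorn's lemma applied to chains of proper subgroups above $\langle S\rangle$). The trivial case where $G$ has no proper subgroups at all is vacuous on both sides.
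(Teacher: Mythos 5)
Your proof is correct and follows essentially the same route as the paper: both directions use the Zorn's lemma fact that in a finitely generated group every proper subgroup lies in a maximal one, and the reverse direction uses $M$ itself as the normally generating set that fails to generate. The only cosmetic difference is that you argue the reverse direction by contradiction where the paper states it as a contrapositive.
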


Recall, that a subgroup $H\leq G$ is \emph{normally generated} by a subset $S\subseteq G$ if $H$ is the smallest normal subgroup of $G$ containing $S$. We denote the subgroup normally generated by $S$ by $\ll S\gg^G$ or simply $\ll S\gg$.

\begin{proof}[Proof of Proposition \ref{obs}]

$\Rightarrow$. Let $G$ be in $\mathcal{MN}
$ and assume by contradiction that there is a normally generating set $S$ which is not a generating set. Since $G$ is finitely generated, every proper subgroup is contained in some proper maximal subgroup \cite{N37}, therefore $\langle S\rangle\leq M<G$ for some proper maximal subgroup $M$ . Then $\ll S\gg=G\leq ~\ll M \gg=M< G$. We obtain a contradiction.

$\Leftarrow$. Conversely, we will prove that if all normally generating sets are generating sets then all maximal subgroups are normal. We prove the equivalent statement: if there is a maximal subgroup $M$ which is not normal, then there exists a normally generating set which is not a generating set. We take as a normally generating set $S=M$. Then $\ll S\gg=G$ since $M$ is maximal and $\langle S\rangle\neq G$ since $M$ is proper. 
\end{proof}

\subsection{Free group and free products do not belong to $\mathcal{MN}$} \label{freeproduct} It follows from Section~\ref{section-obs} that the free group $F_2=\langle x, y\rangle$ of rank $2$ is not in $\mathcal{MN}
$. For this notice that $\ll y^{-1}xy, x^{-1}yx \gg=F_2$, however $\langle y^{-1}xy, x^{-1}yx \rangle \neq F_2$.

More generally, a free product of a finite number of non-trivial finitely generated groups $A_1, \dots, A_k$ does not belong to the class $\mathcal{MN}$. To see this, first suppose that $k=2$ and suppose that $A_1$ is generated by $\{a_1, \dots, a_k\}$ and $A_2$ is generated by $\{b_1, \dots, b_m\}$. The set $$\{b_1^{-1} a_1 b_1, b_1^{-1} a_2 b_1,\dots, b_1^{-1}a_k b_1^{-1}, a_1^{-1}b_1a_1,a_1^{-1}b_2a_1, \dots, a_1^{-1}b_m a_1\}$$ normally generates the group $A_1\ast A_2$ but does not generate it; it follows that $A_1\ast A_2$ does not belong to $\mathcal{MN}$. More generally, a free product $A_1\ast \dots \ast A_k$ can be seen as a free product of $A_1$ and $A_2\ast\dots\ast A_k$ and we use the argument for $k=2$.

Notice that the infinite dihedral group $D_{\infty}$, being isomorphic to $\Z/2\Z~\ast~\Z/2\Z$, is not in $\mathcal{MN}$. 

\subsection{Proof of Theorem \ref{properties}}

\begin{proof}[Proof of Theorem \ref{properties}]
The implication $2\Rightarrow 1$ is obvious. 

$1\Rightarrow 2$.  
Consider the natural projection $\pi: G\rightarrow G/N$. If $M$ is a maximal subgroup of the quotient $G/N$ then its preimage $\pi^{-1}(M)$ is a maximal subgroup of $G$. Moreover, since $G$ is in $\mathcal{MN}
$ then $\pi^{-1}(M)$ is normal in $G$ and it follows that $M$ is normal in $G/N$. 

$1\Rightarrow 3$.  Suppose that $G$ belongs to $\mathcal{MN}$. Then in particular every maximal subgroup $M$ of $G$ is of finite index. Indeed, by the lattice theorem (also called the fourth isomorphism theorem) there is a one-to-one correspondence between subgroups of $G$ containing $M$ and those of $G/M$. Since there are no proper subgroups in $G$ containing $M$ we deduce that $G/M$ does not have proper subgroups which implies that $G/M$ is a cyclic group of prime order.

Moreover, if $Q$ is a finite quotient of $G$, then $Q$ belongs to $\mathcal{MN}$ by Theorem \ref{properties}. A finite group is in $\mathcal{MN}$ if and only if it is nilpotent \cite[5.2.4]{MR648604}.

$3\Rightarrow 1$. Let $M$ be a maximal subgroup of $G$. By the assumption, $M$ is of finite index of $G$. Let $K$ be the normal core of $M$, that is $K=\cap_{g\in G} g^{-1}Mg$. The subgroup $K$ is normal and of finite index in $G$. The quotient $G/K$ is finite and therefore nilpotent. It follows that the maximal subgroup $M/K$ in $G/K$ is normal. We conclude that $M$ is normal in $G$. 

$1\Rightarrow 4$. If $G$ is in $\mathcal{MN}$ then for any maximal subgroup $M$ the quotient $G/M$ is isomorphic to $\mathbb{Z}/p\mathbb{Z}$ by the lattice theorem (also called the fourth isomorphism theorem), in particular, $G/M$ is abelian. Therefore $[G,G]\leq M$ and we conclude that $[G,G]\leq \Phi(G)$.

$4\Rightarrow 1$. Let $M$ be a maximal subgroup of $G$. Since the derived subgroup $[G,G]$ is contained in $\Phi(G)$, it is also contained in $M$. Consider $M'=\cap_{g\in G}g^{-1}Mg$, a normal subgroup of $G$. Since $[G,G]$ is normal in $G$, it follows that $[G,G]\leq M'$. Notice that $M/M'$ is a subgroup in the abelian group $G/M'$ and hence normal. It follows that $M$ is normal in $G$. 
\end{proof}

\begin{remark} \emph{The condition on nilpotency of finite quotients in Theorem \ref{properties}.$3$  is essential: indeed, all maximal subgroups of the Lamplighter group $\mathcal{L}=\mathbb{Z}/2\mathbb{Z}\wr \mathbb{Z}$ are of finite index, however there are maximal subgroups of $\mathcal{L}$ which are not normal \cite{MR3278388}. }
\end{remark}


\subsection{Criterion for $p$-groups}
\begin{proposition}
\label{sbg} Let $G$ be a $p$-group and let $H$ be a subgroup of finite index in $G$. If $H\in \mathcal{MN}$ then $G\in \mathcal{MN}$.
\end{proposition}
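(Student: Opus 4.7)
The plan is to verify characterization~(3) of Theorem~\ref{properties}. Since $G$ is a $p$-group, every finite quotient of $G$ is a finite $p$-group and hence nilpotent, so the only nontrivial condition is that every maximal subgroup of $G$ has finite index. I would first record the elementary observation that in any $p$-group a maximal subgroup of finite index has index exactly $p$: if $M$ is such a subgroup with normal core $C$, then $G/C$ is a finite nilpotent $p$-group, hence in $\mathcal{MN}$, so $M/C$ is maximal normal with simple quotient, forcing $[G:M]=p$.

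I would then induct on $n=[G:H]$. The base $n=1$ is trivial. If $H$ is not maximal in $G$, pick an intermediate subgroup $H<K<G$ and apply the induction hypothesis first to $(K,H)$ to get $K\in\mathcal{MN}$, then to $(G,K)$ to conclude. If $H$ is maximal, the preceding observation forces $[G:H]=p$, and a standard argument via the action on $G/H$ shows that an index-$p$ subgroup of a $p$-group is normal. This reduces the statement to the single case $H\lhd G$, $[G:H]=p$.

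In the reduced case, let $M<G$ be maximal. If $H\leq M$ then $M=H$ has index $p$. Otherwise the normality of $H$ gives $HM=G$, so $[G:M]=[H:H\cap M]$ and $[M:M\cap H]=p$; the goal is to bound $[H:H\cap M]$. The identification $H/(H\cap M)\cong G/M$ of $H$-sets shows that intermediate subgroups $H\cap M\leq K\leq H$ correspond to $H$-invariant partitions of the primitive $G$-set $G/M$. A direct computation shows that $m\in M$ acts on $H/(H\cap M)$ by $h(H\cap M)\mapsto mhm^{-1}(H\cap M)$, hence sends the partition from $K$ to the one from $mKm^{-1}$; so if $K$ is normalized by $M$ then its partition is $G=HM$-invariant, which by primitivity forces $K\in\{H\cap M,H\}$.

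Pick a maximal subgroup $\bar M$ of $H$ containing $H\cap M$. Since $H\in\mathcal{MN}$ is a $p$-group, $\bar M\lhd H$ and $[H:\bar M]=p$. If $\bar M=H\cap M$ we are done; otherwise $\bar M$ is a nontrivial intermediate subgroup and by the previous paragraph is not normalized by $M$. The inclusions $M\cap H\leq\bar M\lhd H$ imply that $M\cap H$ normalizes $\bar M$, so the $M$-orbit of $\bar M$ under conjugation factors through $M/(M\cap H)\cong\Z/p\Z$ and consists of $p$ distinct conjugates $\bar M_0,\dots,\bar M_{p-1}$, each normal of index $p$ in $H$. Their intersection is $M$-invariant, properly contained in $\bar M_0$, and contains $H\cap M$, so by the previous paragraph equals $H\cap M$; hence $[H:H\cap M]\leq p^p<\infty$. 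The main obstacle is exactly this last step: producing a finite bound on $[H:H\cap M]$ when $H\cap M$ need not be maximal in $H$, and the key insight is that the primitivity of the $G$-action on $G/M$, combined with the fact that $M/(M\cap H)$ has prime order $p$, severely restricts the $M$-orbits of intermediate subgroups of $H$.
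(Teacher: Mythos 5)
Your overall strategy is sound and genuinely different from the paper's: the paper disposes of the key point (every maximal subgroup of $G$ has finite index) by citing Lemma~1 of Grigorchuk--Wilson, whereas you prove it directly by inducting on $[G:H]$, reducing to the case $H\lhd G$ with $[G:H]=p$, and then running a block-system argument on the primitive $G$-set $G/M$. Most of the details check out: the reduction steps are correct, the computation of the $M$-action on $H/(H\cap M)$ is valid because $m\in M$ normalizes both $H$ (normal in $G$) and $M$, the correspondence between intermediate subgroups and $H$-invariant partitions is standard, and the conclusion that an $M$-normalized intermediate subgroup must be $H\cap M$ or $H$ follows correctly from primitivity since $G=HM$. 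The orbit argument producing $p$ conjugates of $\bar M$ and the bound $[H:H\cap M]\leq p^p$ via their intersection are also fine.

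There is, however, one genuine gap: the step ``pick a maximal subgroup $\bar M$ of $H$ containing $H\cap M$.'' The proposition does not assume $G$ (hence $H$) to be finitely generated, and in a general group a proper subgroup need not be contained in any maximal subgroup; indeed this can fail for $p$-groups, e.g.\ the Pr\"ufer group $\Z(p^{\infty})$ has proper subgroups but no maximal subgroups at all (it lies in $\mathcal{MN}$ vacuously). Nothing you have established earlier rules this out for $H$: knowing that $H\in\mathcal{MN}$ only constrains the maximal subgroups that do exist. So your proof is complete only under an additional hypothesis guaranteeing that $H\cap M$ lies below a maximal subgroup of $H$ --- for instance if $G$ is finitely generated, in which case $H$ is too and the union of a chain of proper subgroups of $H$ is proper, so Zorn's lemma applies. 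This missing existence statement is precisely the delicate content that the paper outsources to the Grigorchuk--Wilson citation; as written, your argument does not replace it in the stated generality.
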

\begin{proof}
Since $H\in \mathcal{MN}$ then all maximal subgroups of $H$ are of finite index. It follows that all maximal subgroups of $G$ are of finite index (see, for instance, \cite{MR2009443}[Lemma 1]). 

Further, let $M$ be a maximal subgroup of $G$. By above, $M$ is of finite index in $G$. Consider the normal subgroup of finite index $K=\cap_{g\in G} g^{-1}Mg\lhd G$. Clearly, $M/K$ is a maximal subgroup of the quotient $G/K$. Use that $G/K$ is nilpotent to conclude that $M/K$ is normal in $G/K$. The proposition follows.
\end{proof}

\begin{remark}
\emph{
Proposition \ref{sbg} does not hold in whole generality. Indeed, the dihedral group $D_{\infty} \notin \mathcal{MN}$ contains $\mathbb{Z} \in \mathcal{MN}$ of finite index.}
\end{remark}

\subsection{Proofs of corollaries}. 
\begin{proof}[Proof of Corollary \ref{free-index}]
We will prove the corollary by showing that the equivalent statement holds: if a finitely generated group $G$ contains a free group $F_n$ with $n\geq 2$ as a subgroup of finite index then $G$ does not belong to $\mathcal{MN}$.

Let $G$ be a finitely generated group containing a subgroup $H$ of finite index isomorphic to the free group $F_n$ with $n\geq 2$. Denote by $K$ the normal subgroup of finite index in $G$ defined by $K=\cap_{g\in G} g^{-1}Hg$. Being a subgroup of $H$, $K$ is a free group of finite rank. Since $G$ is non-amenable it follows that $K$ is isomorphic to a free group $F_m$ with $m\geq 2$.

Let $K^r$ with $r\geq 2$ be the characteristic subgroup of $K$ generated by all $r$-th powers of elements of $K$. Since $K$ is normal in $G$ it follows that $K^r$ is also normal in $G$. Notice that the quotient $G/K^r$ contains as a subgroup $K/K^r$, and the latter is isomorphic to the Burnside group $B(m, r)$ of rank $m$ and exponent $r$. 

Let $r=6$. The group $B(m, 6)$ is finite by  \cite{MR0102554}. In particular, it follows that $K^6$ is of finite index in $K$ and thus of finite index in $G$. 
Further, observe that $B(m,6)$ is not nilpotent. First, for $m=2$, the group $B(2,6)$ contains the subgroup isomorphic to the dihedral group $D_{12}$, and the latter is not nilpotent (see \cite{MR1776769} for the details on subgroups and presentation of $B(2,6)$). For $m\geq 3$, notice that $B(m,6)$ admits the natural epimorphism on the non-nilpotent group $B(2,6)$.

We conclude that the quotient $G/K^6$ is finite and moreover it contains non-nilpotent subgroup $K/K^6$. Therefore $G$ admits a finite non-nilpotent quotient $G/K^6$ and we deduce that $G$ does not belong to $\mathcal{MN}$ by Theorem \ref{properties}.
\end{proof}

\begin{proof}[Proof of Corollary \ref{linear}]
Since $G$ is in $\mathcal{MN}
$ then all maximal subgroups of $G$ are of finite index, as observed in Theorem \ref{properties}. The result of \cite{MR613853} tells us that all maximal subgroups of a finitely generated linear group $G$ are of finite index if and only if $G$ is virtually solvable. Furthermore, since all finite quotients of $G$ are nilpotent, we deduce that $G$ is solvable. Finally, to conclude that $G$ is nilpotent we use the result from \cite{MR0251120}, saying that a finitely generated solvable group such that all of its finite homomorphic images are nilpotent is nilpotent.
\end{proof}
In particular, it implies that a finitely generated linear group is in $\mathcal{MN}$ if and only if it is nilpotent.

\subsection{Further properties of \emph{finitely generated} groups in $\mathcal{MN}$} 
\label{further-prop}In this section we continue the discussion of the properties of finitely generated groups in the class $\mathcal{MN}$. First, we observe that $\mathcal{MN}$ is closed under taking direct products. Then we show that finitely generated groups with infinitely many ends, and thus certain  amalgamated products of groups and HNN extensions, do not belong to $\mathcal{MN}$. 

\begin{cor}
Let $G_1, G_2$ be finitely generated groups in $\mathcal{MN}
$. Then $G=G_1\times G_2$ is in $\mathcal{MN}
$.
\label{direct}
\end{cor}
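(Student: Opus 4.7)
The plan is to verify condition (3) of Theorem~\ref{properties} for $G = G_1\times G_2$: every maximal subgroup has finite index and every finite quotient is nilpotent. Since $G_1, G_2 \in \mathcal{MN}$ are finitely generated, $G$ is finitely generated as well, so the hypothesis of (3) makes sense. I will handle the two conditions separately, using only Theorem~\ref{properties} and the closure of $\mathcal{MN}$ under quotients (implication $1\Rightarrow 2$ already proved).

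For finite quotients, let $\pi\colon G\to Q$ be a surjection onto a finite group and set $Q_i:=\pi(G_i)$. Each $Q_i$ is a quotient of $G_i$, hence in $\mathcal{MN}$, and it is finite as a subgroup of $Q$; by the classical fact that a finite group in $\mathcal{MN}$ is nilpotent (cited in the proof of $1\Rightarrow 3$), each $Q_i$ is nilpotent. Moreover, $G_i\lhd G$ forces $Q_i\lhd Q$, and since $G_1G_2=G$ we get $Q_1Q_2=Q$. Fitting's theorem (the product of two normal nilpotent subgroups is nilpotent) then yields that $Q$ is nilpotent.

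For maximal subgroups of finite index, let $M$ be maximal in $G$ and split into cases. If $G_i\leq M$ for some $i$, then by the correspondence theorem $M/G_i$ is maximal in $G/G_i\cong G_{3-i}\in\mathcal{MN}$, hence of finite (in fact prime) index, so $[G:M]<\infty$. If $M$ contains neither $G_1$ nor $G_2$, then maximality forces $MG_1=MG_2=G$. The key algebraic step is that $G_i\cap M$ is then maximal in $G_i$: if $G_i\cap M\leq H\leq G_i$, then $HM\supseteq M$ is a subgroup of $G$, so $HM=M$ or $HM=G$; in the first case $H\leq G_i\cap M$, and in the second, every $g\in G_i$ writes as $g=hm$ with $h\in H\leq G_i$ and $m\in M$, forcing $m\in G_i\cap M$ and so $g\in H$, whence $H=G_i$. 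Applying the hypothesis $G_i\in\mathcal{MN}$ to the maximal subgroup $G_i\cap M$ gives $[G_i:G_i\cap M]<\infty$, and the canonical bijection $G/M\leftrightarrow G_i/(G_i\cap M)$ coming from $G=MG_i$ yields $[G:M]=[G_i:G_i\cap M]<\infty$.

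The only nonroutine point is the maximality of $G_i\cap M$ in the third case, but as sketched this is a three-line verification. Everything else is bookkeeping with Theorem~\ref{properties} and Fitting. Note that finite generation of $G_1$ and $G_2$ plays no overt role in the argument beyond ensuring $G$ itself is finitely generated in the statement; the core reason is that in $\mathcal{MN}$ every maximal subgroup is normal and thus of prime index, regardless of finite generation.
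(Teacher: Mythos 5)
The first half of your argument (finite quotients are nilpotent, via Fitting's theorem applied to the normal nilpotent subgroups $Q_1,Q_2$ with $Q_1Q_2=Q$) is correct, as is the case $G_i\leq M$. The gap is in the remaining case: when $M$ contains neither factor, you assert that for $G_i\cap M\leq H\leq G_i$ the product $HM$ is a subgroup of $G$. This needs $HM=MH$, which you would get if $H$ or $M$ were normal in $G$ --- but normality of $M$ is essentially the conclusion you are after, so it cannot be assumed. The underlying purely group-theoretic claim (``$G_i\cap M$ is maximal in $G_i$ whenever $M$ is maximal in $G_1\times G_2$ with $MG_1=MG_2=G$'') is false in general: in $A_5\times A_5$ the diagonal subgroup $\Delta$ is maximal, yet $\Delta\cap(A_5\times 1)$ is trivial, and for a non-normal $H\leq A_5$ the set $H\Delta$ is indeed not a subgroup. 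So the hypothesis $G_i\in\mathcal{MN}$ must enter this case in an essential way, and your three-line verification never uses it. The case can be repaired --- show $N_i=G_i\cap M$ is normal in $G$ (it is normalized by $M$ and centralized by $G_{3-i}$, and $MG_{3-i}=G$), pass to $G/(N_1N_2)$ where $M$ becomes a full diagonal subgroup, deduce that each factor becomes simple, and use that a finitely generated simple group in $\mathcal{MN}$ is $\Z/p\Z$ --- but that is a genuinely longer argument than what you wrote.

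For comparison, the paper's proof is one line via criterion (4) of Theorem~\ref{properties}: $[G,G]=[G_1,G_1]\times[G_2,G_2]\leq \Phi(G_1)\times\Phi(G_2)=\Phi(G_1\times G_2)$, the last equality being the Frattini product formula for finitely generated groups. This also shows that your closing remark is off: finite generation is not merely cosmetic --- it is needed for the Frattini product formula, and in any repair of your second case it is needed to rule out simple quotients without maximal subgroups.
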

\begin{proof}
The Frattini subgroup of a direct product of finitely generated groups is the direct product of Frattini subgroups of the direct factors \cite{MR0122885}, \emph{i.e.} $\Phi(G)=\Phi(G_1)\times \Phi(G_2)$. 

Consider the derived subgroup $[G,G]=[G_1,G_1]\times [G_2, G_2]$. By assumption $G_1$ and $G_2$ are in $\mathcal{MN}
$ therefore $[G,G]\leq \Phi(G)$. We conclude that $G$ is in $\mathcal{MN}
$ by Theorem \ref{properties}.
\end{proof}

\begin{proposition}
Let $G$ be a finitely generated group with infinitely many ends. Then $G$ does not belong to $\mathcal{MN}$.
\label{ends}
\end{proposition}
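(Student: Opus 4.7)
The plan is to apply Proposition~\ref{obs} by exhibiting a subset $S \subseteq G$ which normally generates $G$ but does not generate $G$, in direct analogy with the free-product argument of Section~\ref{freeproduct}. By Stallings' theorem on ends of groups, a finitely generated group with more than one end splits non-trivially over a finite subgroup: either $G = A *_C B$ with $C$ finite and $A,B\neq C$, or $G$ is an HNN extension $\langle A, t \mid tct^{-1}=\varphi(c),\, c\in C\rangle$ over a finite proper subgroup $C \lneq A$. The hypothesis of \emph{infinitely} many ends guarantees that the splitting is substantial (in particular, $G$ is not virtually $\mathbb{Z}$).

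In the amalgamated case, I would pick finite generating sets $\{a_1,\dots,a_k\}$ of $A$ and $\{b_1,\dots,b_m\}$ of $B$, choose $a\in A\setminus C$ and $b\in B\setminus C$, and set
\[
S = \{b^{-1}a_i b : 1\leq i\leq k\}\cup \{a^{-1}b_j a : 1\leq j\leq m\}.
\]
Every $a_i$ and every $b_j$ is $G$-conjugate to an element of $S$, so $\ll S\gg = G$. The crux is to prove $\langle S\rangle\neq G$. Since $\langle S\rangle \leq H:=\langle b^{-1}Ab,\, a^{-1}Ba\rangle$, it suffices to show $H\neq G$, for which I would establish that $a\notin H$. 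Every element of $H$ is an alternating product of elements of $b^{-1}Ab$ and $a^{-1}Ba$; on expansion this yields a word of the form
\[
b^{-1}\alpha_1 b \cdot a^{-1}\beta_1 a \cdot b^{-1}\alpha_2 b \cdots
\]
with $\alpha_i\in A$, $\beta_j\in B$, whose letters strictly alternate between $B$ and $A$, and in which the letters $a^{\pm 1}, b^{\pm 1}$ all lie outside $C$. Applying the normal form theorem for amalgamated free products and tracking the reductions (which can occur only when some intermediate $\alpha_i$ or $\beta_j$ happens to lie in $C$), one verifies that every non-trivial such product has reduced length at least $3$, so cannot equal the element $a$ of reduced length $1$. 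The HNN case is treated analogously, with the stable letter $t$ replacing $b$ and Britton's lemma replacing the amalgamated normal form.

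The main obstacle is making the reduction analysis fully rigorous: the amalgamated normal form does admit partial cancellations whenever intermediate letters lie in the finite amalgamated subgroup $C$, and one must argue carefully that such cancellations cannot propagate all the way to collapse the word to length one, given $a, b\notin C$. Geometrically, this reflects the structure of $H$ as a subgroup acting on the Bass-Serre tree $T$ of the splitting: $H$ is generated by the stabilisers of the two vertices $b^{-1}v_A$ and $a^{-1}v_B$, which lie at tree-distance $3$ along the geodesic $b^{-1}v_A - v_B - v_A - a^{-1}v_B$, so by Bass-Serre theory $H$ must be a proper subgroup of $G$.
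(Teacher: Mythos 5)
Your strategy is genuinely different from the paper's. The paper argues by contradiction using soft structure theory: if $G\in\mathcal{MN}$ then $G/\Phi(G)$ is abelian by Theorem~\ref{properties}, the Frattini subgroup of a finitely generated group with infinitely many ends is finite and nilpotent, so $G$ is solvable; Wehrfritz's theorem (solvable + all finite quotients nilpotent $\Rightarrow$ nilpotent) then forces $G$ to be nilpotent, contradicting the fact that Stallings' splitting makes $G$ contain $F_2$. Your route, via Proposition~\ref{obs} and an explicit normally generating non-generating set, is closer in spirit to Section~\ref{freeproduct} and to the corollary the paper derives \emph{after} Proposition~\ref{ends}. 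Unfortunately, as written it has a genuine gap precisely at the point you flag.

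The claim that ``every non-trivial such product has reduced length at least $3$'' is false once $C\neq 1$. For $\beta\in C\setminus\{1\}$ the element $a^{-1}\beta a$ lies in $\langle S\rangle\cap A$ and typically in $A\setminus C$, so $H=\langle b^{-1}Ab,\,a^{-1}Ba\rangle$ does contain non-trivial elements of reduced length $1$; the argument of Section~\ref{freeproduct} goes through only because there $C=1$ and no syllable of the concatenated word can lie in the amalgamated subgroup. Your fallback assertion --- that Bass--Serre theory makes the subgroup generated by two vertex stabilisers at tree-distance $3$ automatically proper --- is not a citable theorem, and the analogous statement at distance $2$ is simply wrong: in $G=A\ast_C B$ with $B=S_3$, $C=\langle(12)\rangle$ and $b$ a $3$-cycle of $B$, one has $\langle C, b^{-1}Cb\rangle=B$, hence $\langle A, b^{-1}Ab\rangle\supseteq\langle A,B\rangle=G$ even though the two fixed vertices are at distance $2$. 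So distance in the tree alone cannot carry the proof; one must control how $C$ meets its conjugates $b^{-1}Cb$ and $a^{-1}Ca$ (the case $C\cap b^{-1}Cb\neq 1$ is exactly where cancellations propagate), and this analysis is not done. I believe the properness of your particular $H$ can be established with a careful fundamental-domain argument, but it requires real work, must be redone for the HNN case via Britton's lemma, and is entirely sidestepped by the paper's Frattini/Wehrfritz argument. If you want a hands-on proof in this spirit, an easier fix is to pass to the quotient $G/\!\ll C\gg$, which is a free product $\bar A\ast\bar B$ and falls under Section~\ref{freeproduct} whenever both factors are non-trivial --- but that too fails in general (e.g.\ $\bar A$ can be trivial), which is presumably why the paper chose the softer route.
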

\begin{proof}
Suppose by contradiction that $G$ belongs to $\mathcal{MN}$.
Then $[G,G]\leq \Phi(G)$ by Theorem~\ref{properties} and hence the quotient $G/\Phi(G)$ is abelian. Further, since $G$ is a finitely generated group with infinitely many ends, it follows that $G$ has a finite nilpotent Frattini subgroup \cite[Corollary $8.6$]{MR2377495}. Hence $G$ is abelian-by-nilpotent and thus solvable. We use the result of Wehfritz \cite{MR0251120}, saying that a finitely generated solvable group such that all of its finite quotients are nilpotent is nilpotent, to conclude that $G$ is nilpotent.

By assumption $G$ has infinitely many ends. It follows from the Stallings' theorem (see \cite[$5.A.10$]{MR0415622}) that \emph{either} $G$ admits a splitting as $G=H\ast_C K$ as a free product with amalgamation  over a proper finite subgroup $C$ and $\max\{|H:C|,|K:C|\}> 2$, \emph{or} $G$ admits a splitting $G=\langle H,t | t^{-1}C_1t=C_2\rangle$ where $C_1$ and $C_2$ are isomorphic finite proper subgroups of $H$. Observe that in both cases, $G$ contains a free group $F_2$ and thus cannot be nilpotent. We obtain a contradiction with $G$ being in $\mathcal{MN}$.
\end{proof}

\begin{cor} Let $G$ be a finitely generated group such that one of the following holds:
\begin{itemize}
\item $G$ admits a splitting $G=H\ast_C K$ as a free product with amalgamation where $C$ is a finite group with $C\neq H$ and $C\neq K$;
\item $G$ admits a splitting $G=\langle H,t | t^{-1}C_1t=C_2\rangle$ where $C_1$ and $C_2$ are isomorphic finite proper subgroups of $H$.
\end{itemize}
Then $G$ does not belong to $\mathcal{MN}$.
\end{cor}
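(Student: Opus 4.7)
The plan is to show that in each case the splitting forces either $G$ to have infinitely many ends (so Proposition \ref{ends} applies) or $G$ to admit $D_\infty$ as a quotient (so Theorem \ref{properties}(2), combined with the fact $D_\infty \notin \mathcal{MN}$ observed in Section \ref{freeproduct}, applies).

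In the HNN case $G = \langle H, t \mid t^{-1} C_1 t = C_2 \rangle$ with $C_1, C_2$ proper finite subgroups of $H$, the hypothesis matches exactly the second alternative of Stallings' theorem as quoted inside the proof of Proposition \ref{ends}, so $G$ has infinitely many ends and Proposition \ref{ends} yields $G \notin \mathcal{MN}$. (Intuitively, the valence of the Bass--Serre tree at each vertex is $[H:C_1] + [H:C_2] \geq 4$, confirming this.)

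In the amalgamated case $G = H *_C K$ with $C$ finite and $C \neq H$, $C \neq K$, I would split according to the value of $\max\{[H:C], [K:C]\}$. If this maximum exceeds $2$, the first alternative of Stallings' theorem from Proposition \ref{ends} applies verbatim and gives $G \notin \mathcal{MN}$. Otherwise $[H:C] = [K:C] = 2$, so $C$ is normal of index $2$ in each of $H$ and $K$, hence normal in $G$, and the quotient is
\[
G/C \;\cong\; (H/C) * (K/C) \;\cong\; \mathbb{Z}/2\mathbb{Z} * \mathbb{Z}/2\mathbb{Z} \;=\; D_\infty.
\]
Since $D_\infty \notin \mathcal{MN}$ by Section \ref{freeproduct} and $\mathcal{MN}$ is closed under quotients by Theorem \ref{properties}, we conclude $G \notin \mathcal{MN}$.

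The main obstacle is precisely this degenerate subcase $[H:C] = [K:C] = 2$: in that situation $G$ is virtually infinite cyclic and has only two ends, so Proposition \ref{ends} does not apply directly. The observation that $D_\infty \notin \mathcal{MN}$, recorded in Section \ref{freeproduct}, is what bridges this gap, letting the two-ended case be handled via a quotient argument rather than through Stallings' theorem.
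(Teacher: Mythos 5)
Your proof is correct and follows essentially the same route as the paper: isolate the degenerate amalgam case $[H:C]=[K:C]=2$, where $G$ surjects onto $D_\infty\notin\mathcal{MN}$ and quotient-closedness of $\mathcal{MN}$ (Theorem \ref{properties}) finishes, and reduce every other case to Proposition \ref{ends} via Stallings' theorem. Your explicit verification that $C$ is normal in $G$ and that $G/C\cong \mathbb{Z}/2\mathbb{Z}\ast\mathbb{Z}/2\mathbb{Z}$ merely fills in a detail the paper leaves implicit.
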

\begin{proof}
We will first treat the special case when $G$ admits a splitting $G=H\ast_C K$ as a free product with amalgamation and $|H:C|=|K:C|=2$. Notice that $G$ surjects onto the infinite dihedral group $\mathbb{Z}/2\mathbb{Z} \ast \mathbb{Z}/2\mathbb{Z}\cong D_{\infty}$. We conclude that $G$ does not belong to $\mathcal{MN}$ by Theorem \ref{properties} and Section \ref{freeproduct}.

If $G$ satisfies the assumption of the corollary, then either it has infinitely many ends by the Stallings' theorem (see \cite[$5.A.10$]{MR0415622}) or it realizes the special case above. We conclude that $G$ does not belong to $\mathcal{MN}$ by the first paragraph of the proof and Proposition~\ref{ends}.
\end{proof}

Observe, that Proposition \ref{ends} gives another obstruction for a finitely generated group in $\mathcal{MN}$ to contain a free group of finite index.

\section{Examples and non-examples}

It is well-known that a finite group belongs to the class $\mathcal{MN}$ if and only if it is nilpotent \cite[5.2.4]{MR648604}. In fact, the finiteness of the group is only required to prove the necessary implication, whilst the sufficient implication remains true for infinite nilpotent groups. 
Indeed, all nilpotent groups belong to $\mathcal{MN}$ since they satisfy the normalizer condition (\emph{i.e.} any proper subgroup is contained properly in its normalizer). 
More generally, any locally nilpotent group belongs to $\mathcal{MN}$, see \cite[12.1.5]{MR648604}. We recall that a group $G$ is \emph{locally nilpotent} if every finitely generated subgroup of $G$ is nilpotent.

\begin{cor}
The first Grigrochuk group $\Gamma$ and its periodic generalisations, namely $\{\Gamma_{\omega}\}$ with $\omega \in \{0,1,2,\}^{\mathbb{N}}$ containing all three letters $\{0,1,2\}$ infinitely many times, belong to $\mathcal{MN}
$ \cite{MR565099,MR764305}. Also Gupta-Sidki $p$-groups \cite{MR759409} and, more generally, all multi-edge spinal torsion groups acting on a regular $p$-ary rooted tree, with $p$ an odd prime, belong to $\mathcal{MN}$.
\end{cor}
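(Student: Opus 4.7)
The plan is to invoke the characterization \emph{(3)} of Theorem~\ref{properties}: a group belongs to $\mathcal{MN}$ if and only if all of its maximal subgroups are of finite index and all of its finite quotients are nilpotent. So I would verify the two conditions separately, for each family.

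First I would deal with the nilpotency of finite quotients uniformly. All the groups in question (the first Grigorchuk group $\Gamma$, the periodic Grigorchuk groups $\Gamma_\omega$ with $\omega$ containing each of $\{0,1,2\}$ infinitely often, the Gupta-Sidki $p$-groups, and more generally the multi-edge spinal torsion groups on a regular $p$-ary rooted tree with $p$ an odd prime) are finitely generated periodic groups in which every element has order a power of the relevant prime $p$. Consequently, any finite quotient $Q$ of such a group is a finite group in which every element has $p$-power order, hence a finite $p$-group, hence nilpotent. This handles the second condition of Theorem~\ref{properties}.(3) for every family at once.

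For the first condition, that all maximal subgroups are of finite index, I would appeal to the results that have already been cited in the introduction of the paper. For $\Gamma$ this is Pervova's theorem \cite{MR1841763}; for the periodic Grigorchuk groups $\Gamma_\omega$ (under the stated hypothesis on $\omega$) and for the Gupta-Sidki $p$-groups this is the extension obtained in \cite{MR2197824}; for the general multi-edge spinal torsion groups on a regular $p$-ary rooted tree with $p$ odd this is the main result of \cite{Maxsub_spinal}. Putting these with the previous paragraph, condition \emph{(3)} of Theorem~\ref{properties} holds in each case and the conclusion follows.

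There is essentially no obstacle beyond citation: the deep content lives in \cite{MR1841763,MR2197824,Maxsub_spinal}, while the rest of the argument is a mechanical application of Theorem~\ref{properties}. The only point that deserves a brief remark is the passage from ``finitely generated periodic $p$-group'' to ``finite quotients are $p$-groups,'' which uses nothing more than the fact that order divides order under surjective homomorphisms.
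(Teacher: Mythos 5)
Your proposal is correct and follows essentially the same route as the paper: verify condition (3) of Theorem~\ref{properties} by citing \cite{MR1841763}, \cite{MR2197824} and \cite{Maxsub_spinal} for the finite-index property of maximal subgroups, and observe that these are all $p$-groups so their finite quotients are finite $p$-groups, hence nilpotent. The only (inessential) discrepancy is bibliographic: the paper attributes the result for the periodic $\Gamma_\omega$ to Pervova's first paper \cite{MR1841763} rather than to \cite{MR2197824}.
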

\begin{proof}
It was shown by Pervova in \cite{MR1841763} that for the Grigorchuk groups $\Gamma_{\omega}$ with $\omega\in~\{0,1,2\}^{\mathbb{N}}$, such that each of $0,1,2$ appears infinitely many times in $\omega$, maximal subgroups of $\Gamma_{\omega}$ are of finite index. Moreover, it is known that such $\Gamma_{\omega}$ are $2$-groups \cite{MR764305} therefore their finite quotients are nilpotent.

Pervova extended her result in \cite{MR2197824} by showing that also in Gupta-Sidki $p$-groups all maximal subgroups are of finite index. Recently the result for Gupta-Sidki $p$-groups was generalized in \cite{Maxsub_spinal} to torsion multi-edge spinal groups acting on the regular $p$-ary rooted tree, for $p$ an odd prime.
In addition, the Gupta-Sidki $p$-groups and, more generally, torsion multi-edge spinal groups  acting on the regular $p$-ary rooted tree, for $p$ an odd prime, are known to be $p$-groups \cite{Maxsub_spinal}; therefore their finite quotients are nilpotent.
\end{proof}

\begin{remark} \emph{Corollary \ref{direct} provides more examples of groups in the class $\mathcal{MN}
$. For instance, the direct product $\Gamma\times \Gamma$ or $\Gamma\times G_p$, where $\Gamma$ is the first Grigorchuk group and $G_p$ is the Gupta-Sidki $p$-group, is also in $\mathcal{MN}
$. }
\end{remark}

It was shown by Grigorchuk and Wilson in \cite{MR2009443} that every infinite finitely generated subgroup $H$ of $\Gamma$, the first Grigorchuk group, is (abstractly) commensurable with $\Gamma$. In the same paper, the authors showed that for a  group $G$ (abstractly) commensurable with $\Gamma$, all maximal subgroups of $G$ are of finite index. It follows that all finitely generated subgroups of $\Gamma$ are in the class $\mathcal{MN}$. 

Garrido in \cite{Garrido2014} studied the subgroup structure of Gupta-Sidki $3$-group $G_3$ and, following results of Grigorchuk and Wilson, showed that all infinite finitely generated subgroups of $G_3$ have their maximal subgroups of finite index. 
It follows that all finitely generated subgroups of $G_3$ are in $\mathcal{MN}$.

\begin{remark}
\emph{
Another implication of Theorem \ref{properties} is the following: if a finitely generated group $G$ is in $\mathcal{MN}$ and if it is just-infinite (\emph{i.e.} all proper quotients of $G$ are finite) \emph{or} it is just-non-solvable group (\emph{i.e.} all proper quotients of $G$ are solvable) then $G$ is just-non-nilpotent.}

\end{remark}

As observed in Section \ref{freeproduct}, the free group as well as free products of finitely generated groups do not belong to $\mathcal{MN}$. Moreover, a finitely generated group in $\mathcal{MN}$ does not admit a free subgroup of finite index and, more generally, does not admit infinitely many ends. 

Another non-example of the class $\mathcal{MN}$ is a just-non-solvable weakly branch Basilica group $G$ defined by Grigorchuk and Zuk in \cite{MR1902367}.
\begin{cor}
The Basilica group $G$, of automorphisms of a rooted binary tree generated by automorphisms $a=(1,b)$ and $b=(1, a)\sigma$, is not in the class $\mathcal{MN}
$. 
\end{cor}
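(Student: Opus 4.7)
The plan is to suppose for contradiction that $G\in\mathcal{MN}$ and derive a contradiction from the metabelianization $G/G''$. By the implication $1\Rightarrow 2$ of Theorem~\ref{properties}, the quotient $G/G''$ belongs to $\mathcal{MN}$; by Theorem~\ref{properties}(3) all of its finite quotients are nilpotent; and since $G/G''$ is finitely generated and metabelian (hence solvable), Wehrfritz's theorem \cite{MR0251120} then forces $G/G''$ itself to be nilpotent. The remaining task is therefore to show that $G/G''$ is \emph{not} nilpotent.

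For this I would use the standard module-theoretic criterion: a finitely generated metabelian group $H$ is nilpotent if and only if $H'/H''$, viewed as a module over $\mathbb{Z}[H/H']$, is annihilated by some power of the augmentation ideal. Here $G/G'=\mathbb{Z}^2=\langle t_1,t_2\rangle$ with $t_i$ the image of $a,b$ respectively, and $G'/G''$ is the cyclic $\mathbb{Z}[t_1^{\pm 1},t_2^{\pm 1}]$-module generated by $\overline{[a,b]}$. Using the wreath recursion $a=(1,b)$, $b=(1,a)\sigma$, one computes $[a,b]=(a^{-1}ba,\, b^{-1})\in\mathrm{St}_G(1)$ and then checks directly that $a\cdot[a,b]=[a,b]\cdot a=(a^{-1}ba,1)$, so $a$ commutes with $[a,b]$ in $G$; in module language, $(t_1-1)\cdot\overline{[a,b]}=0$ in $G'/G''$. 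Since the augmentation ideal is $(t_1-1,t_2-1)$ and $(t_1-1)$ already annihilates the generator, it suffices to prove that $(t_2-1)^k\cdot\overline{[a,b]}\neq 0$ for every $k\geq 1$, i.e.\ that the iterated commutators $[b,[b,\dots,[b,[a,b]]\dots]]$ with $k\geq 1$ nested $b$'s remain non-trivial modulo $G''$.

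The main obstacle is this non-vanishing claim. Since Basilica is not finitely presented, one cannot just read it off from a presentation; instead I would establish it inductively using the contracting self-similar structure of $G$ recorded in \cite{MR1902367}, tracking the sections of the iterated commutators through the wreath decomposition $\psi\colon\mathrm{St}_G(1)\to G\times G$ at successive levels and verifying that the sections cannot simultaneously land in $G''$ at any level. This shows that $G'/G''$ is not annihilated by any power of the augmentation ideal, contradicting the nilpotency of $G/G''$ obtained in the first paragraph, so that $G\notin\mathcal{MN}$.
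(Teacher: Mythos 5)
Your reduction is sound and genuinely different from the paper's: assuming $G\in\mathcal{MN}$, passing to $G/G''$ via $1\Rightarrow 2$, invoking $1\Rightarrow 3$ and Wehrfritz to force $G/G''$ to be nilpotent, and then aiming to contradict this is a valid plan; the module-theoretic criterion for nilpotency of a finitely generated metabelian group is correct, and your computation $[a,b]=(a^{-1}ba,b^{-1})$, $[a,[a,b]]=1$ checks out. But the proof has a genuine gap exactly where you flag it: the claim that $(t_2-1)^k\cdot\overline{[a,b]}\neq 0$ in $G'/G''$ for all $k$ is the entire content of the argument, and what you offer for it is not a proof but a plan (``tracking the sections \dots and verifying that the sections cannot simultaneously land in $G''$ at any level''). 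To execute that plan you would need a usable description of $G''$ (or at least of $\psi(G'')$) inside $G\times G$, which you neither state nor derive; without it the induction has no base to stand on. For example, already $[[a,b],b]=(a^{-1}b^{-2}a,\,ba^{-1}ba)$, and deciding whether this lies in $G''$ requires exactly the structural information that is missing.

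The paper closes this point by a shortcut you could also use: the Grigorchuk--\.Zuk (L-)presentation of $G$ shows that $G/\!\ll b^2, abab\gg$ is isomorphic to $D_{\infty}$ (so your remark that one ``cannot read things off a presentation'' is not quite right --- the presentation is infinite but explicit). Since $D_{\infty}$ is metabelian, it is a quotient of $G/G''$, and it is not nilpotent; indeed in $D_\infty$ your iterated commutators become $\bar a^{\pm 2^k}\neq 1$, which is precisely the non-vanishing you need. So your argument can be completed, but only by importing the same input (the identification of the $D_\infty$ quotient) that the paper uses directly via $1\Rightarrow 2$ and Section~\ref{freeproduct}, at which point the metabelian/Wehrfritz machinery becomes unnecessary.
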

\begin{proof}
The presentation of the group $G$ obtained in \cite{MR1902367} implies that the quotient $G/N$ with $N=\ll b^2, abab \gg$ is isomorphic to the infinite dihedral group $D_{\infty}$. The latter, as discussed in Section \ref{freeproduct}, does not belong to the class $\mathcal{MN}
$ and it follows that $G$ does not belong to $\mathcal{MN}
$ by Theorem \ref{properties}.
\end{proof}

\subsection{Further examples: a group in $\mathcal{MN}$ containing the free group $F_2$ as a subgroup}
\label{Olshanski}

Existence of a countable simple group without maximal subgroups was discussed by Ol'shanski in \cite[Theorem $35.3$]{MR1191619}. The construction can be adapted to obtain a countable simple group without maximal subgroups which contains the free group $F_2$ as a subgroup (let $G^1$ in the proof to be the free group $F_2$).
Such a group belongs to the class $\mathcal{MN}$ for the simple reason of not having maximal subgroups. It follows that the class $\mathcal{MN}$ is not closed under taking subgroups. 

\appendix
\section{Relation to the Andrews-Curtis conjecture}
\label{appendix}
We will briefly explain here the relation to the famous Andrews-Curtis conjecture mentioned in the introduction. 
Recall that for a group $G$ generated by a set $S$, the following transformations on the set $G^n$, $n\geq 1$, are called the \emph{Andrews-Curtis moves}:
\begin{align*}
R_{ij}^{\pm}(g_1,\dots,g_i,\dots, g_j,\dots, g_n)&=(g_1,\dots,g_{i}g_j^{\pm 1},\dots,g_j,\dots, g_n),\\
L^{\pm}_{ij}(g_1,\dots,g_i,\dots, g_j,\dots, g_n)&=(g_1,\dots,g_j^{\pm 1}g_{i},\dots,g_j,\dots, g_n),\\
 I_j(g_1,\dots,g_j,\dots,g_n)&=(g_1,\dots,g_j^{-1},\dots,g_n),\\
 AC_{j,s}(g_1,...,g_j,...,g_n)&=(g_1,...,s^{-1}g_js,...,g_n)\end{align*} where $1\leq i,j \leq n$, $i\neq j$ and $s\in S$.

The \emph{Andrews-Curtis conjecture} asserts that, for a free group $F_n$ of rank $n\geq 2$ and a free basis $(x_1, \dots, x_k)$ of $F_n$, every normally generating $n$-tuple $(y_1, \dots, y_n)$ of $F_n$ can be transformed into $(x_1, \dots, x_n)$ by a sequence of Andrews-Curtis moves. Note, that very often this conjecture is formulated in terms of balanced presentations of the trivial group. A \emph{balanced presentation} is a presentation of a group that uses a finite number of generators and an equal number of relations. The Andrews-Curtis conjecture stated above is equivalent to the following: for $n\geq 2$ every balanced presentation $\langle x_1, \dots, x_n\mid w_1, \dots, w_n\rangle$ of the trivial group can be transformed using  Andrews-Curtis moves on $\{w_1, \dots, w_n\}$ to the trivial presentation  $\langle x_1, \dots, x_n\mid x_1, \dots, x_n\rangle$.

There have been many attempts to disprove this conjecture. A way to do it would be to find a \emph{potential counter-example}, \emph{i.e.} a normally generating $n$-tuple of $F_n$,
and to show that its image in some quotient $G$ of $F_n$ cannot be transformed into 
that of the basis $(x_1, \dots, x_n)$ by the Andrews-Curtis moves. This motivates the study of the Andrews-Curtis conjecture in finitely generated groups. 

It follows from \cite{MR1684568} that in any abelian group $\mathcal{A}$ the image of a normally-generating tuple of $F_n$ \emph{can} be transformed to that of the basis $(x_1, \dots, x_n)$. 
Furthermore, it was shown in \cite{Myropolska}  that a finitely generated group $G$ in the class $\mathcal{MN}$ satisfies the so-called generalised Andrews-Curtis conjecture\footnote{\emph{i.e.}
two normally generating $n$-tuples of $G$ are connected by the Andrews-Curtis moves if and only if their images in the abelianization $G/[G,G]$ are connected by the Andrews-Curtis moves.} and thus, by the remark above, cannot confirm potential-counter examples to the conjecture. We refer the reader to \cite[Chapter 3]{thesis} for further details.

\bibliography{Bibliography}
\bibliographystyle{alpha}

\end{document}